\newcommand{\RR}{\ensuremath{\mathbb{R}}}
\newcommand{\p}{\ensuremath{\partial}}
\renewcommand{\theta}{\vartheta}
\newtheorem{Theorem}{Theorem}[section]
\newtheorem{definition}{Definition}
\numberwithin{equation}{section}
\begin{document}
\begin{frontmatter}

\title{Attractivity and stability in the competitive systems of PDEs of Kolmogorov type}
\author{Joanna Balbus}
\ead{joanna.balbus@pwr.wroc.pl}

\date{}

\address{Institute of Mathematics and Computer Science,
Wroc{\l}aw University of Technology, Wybrze\.ze Wyspia\'nskiego 27,
PL-50-370 Wroc{\l}aw, Poland}

\begin{abstract}
In this paper we consider $N$-species nonautonomous competitive
systems of partial differential equations of Kolmogorov type. Under
the Neumann boundary conditions we give a sufficient condition for
the system to be uniformly stable and globally attractive.
\end{abstract}

\begin{keyword}
Reaction -- diffusion system \sep upper average \sep lower average
\sep persistence \sep permanence \sep Lyapunov functional \sep global
attractivity \sep stability.
\end{keyword}
\end{frontmatter}

\section{Introduction}
In this paper we consider Kolmogorov systems of reaction-diffusion
parabolic partial differential equations (PDEs)
\begin{equation*}
\frac{\p u_i}{\p t}= \Delta u_i + f_i(t,x,u_1,\dots,u_N)u_i \quad t>0,\ x \in \Omega,\ 1\le i \le N.
\end{equation*}
From the biological viewpoint this problem represents a model of
population growth where $u_i(t,x)$ is the density of the $i$-th
species at time $t$ and spatial location $x \in \bar{\Omega}$,
$\Omega \subset R^n$ is a bounded habitat, $f_i(t,x, u_1, \dots,
u_N)$ is the local per capita growth rate of the $i$-th species. The
system is usually considered with appropriate boundary conditions,
either Dirichlet or Neumann, or Robin boundary conditions. In this
paper we investigate the system under the Neumann boundary
conditions. We deal with competitive systems. It means that the
derivatives $\frac{\p f_i}{\p u_j}$, $1\le i,j \le N$ are
nonpositive.

One of the major problems in models of population growth are
permanence and stability. The authors of many papers focus on finding
necessary or sufficient conditions for permanence or stability in
models which are described by ordinary or partial differential
equations. Permanence means that any positive solution of the model
has all its components, for large $t$, bounded away from zero and the
lower bound is independent of the solution.  Global attractivity
states that the difference of any two positive solutions tends to
zero as $t$ goes to infinity. In \cite{Ahmad} S. Ahmad and A.C.
Lazer, using the upper and lower averages of a function, found
sufficient conditions  for permanence and global attractivity of the
competitive $N$ species Lotka-Volterra system of ODEs. In \cite{moja}
we extended their result for the $N$ species competitive Kolmogorov
system of ODEs. In terms of the upper and lower averages of a
function we found sufficient conditions for such a system to be
permanent and globally attractive.

In \cite{Langa} the authors considered two species Lotka -- Volterra
system of PDEs. They investigated behavior of this system not only as
$t$ goes to infinity, but in the ''pullback'' sence. It means that we
are starting with a fixed initial condition further and further back
in time. They introduced a notion ''pullback permanence'' which has
an interesting biological interpretation. When two species have
already been competing for a long time then neither species will have
died out. Their densities are bounded below in a uniform way, no
matter how long this population has been running.

In \cite{M-B} we investigate the $N$ species competitive Kolmogorov
system of PDEs. Using the methods of supersolutions and subsolutions
for parabolic PDEs we found sufficient conditions for permanence in
such systems.

To establish the global attractivity many authors use appropriate
Lyapunov function in the case of ODE or Lyapunov functional in the
case of PDE (see for example \cite{Ahmad}, \cite{moja}, \cite{RR1},
\cite{Sze-Bi}, \cite{Tineo}). In  \cite{Gopalsamy-almost-periodic,
Gopalsamy-periodic} K. Gopalsamy dealt with, respectively,  periodic
and almost periodic solution of a Lotka - Volterra system of ODEs. A.
Tineo \cite{Tineo} considered the nonautonomous  Kolmogorov system of
ODEs
\begin{equation}
\label{Kolmogorov}
\tag{K}
u_i'=f_i(t,u)u_i \quad 1 \le i \le N,
\end{equation}
where $f:\RR \times \RR_+^N \to \RR^N$ is a continuous function and
$\RR_+^N$ is the nonnegative orthant $\RR_+^N=\{u\in \RR^N:\ u_i\geq
0, 1 \le i \le N\}$. He found a sufficient condition for the system
\eqref{Kolmogorov} to be globally attractive. In \cite{moja} we
considered system \eqref {Kolmogorov} with slightly weaker
assumptions for the function $f$ than those $f$ in \cite{Tineo}. In
\cite{Sze-Bi} Sze-Bi Hsu examined Lyapunov functions (or functionals)
for various ecological models which take form of ODE systems (or
reaction-diffusion PDE systems). First the author constructed
Lyapunov functions for the predator prey system and then constructed
Lyapunov functionals for the corresponding reaction-diffusion PDE
system. Sze-Bi Hsu considered the models in which the reaction term
is independent on $t$ and $x$.

In \cite{Riviero} A. Tineo and J. Riviero dealt with the two species
nonautonomous competitive Lotka-Volterra system with diffusion. They
resigned from the Lyapunov function (functional) to establish of
attractivity of this system. They used a powerful tool which is the
iteration monotone methods.

Usually the same conditions which guarantee permanence in the ODE
model of population growth ensure that the model is globally
attractive ( see for example \cite{Ahmad}, \cite{moja}). But when we
try to prove that a system of Kolmogorov type of PDEs is globally
attractive then some difficulties occur. Firstly, we can prove that
the system is globally attractive only under the Neumann boundary
conditions. Secondly, besides the conditions which guarantee the
permanence we need additional inequalities which contain the
parameters which occur in the definition of permanence. Even in the
case of two species problem it is a difficult to obtain sufficient
conditions which guarantee both permanence and global attractivity of
Kolmogorov system of PDEs. One can try to introduce a partial
ordering. The idea is that we can compare solutions of our system of
PDE with the functions which are independent on $x$ in a way that
when we apply partial ordering our solutions of PDE are between the
functions independent on $x$. A difficulty occurs when we let the
parameter which is a spatial location to be arbitrary.

\bigskip
In this paper we find sufficient conditions for $N$ -- species system
of PDEs of Kolmogorov type under the Neumann boundary conditions  to
be globally attractive. Conditions which we obtained are stronger
than those guaranteeing perrmanence. Assumptions for a function $f$
in this paper is the same as in \cite{M-B}.

This paper is organized as follows.

In Section 2 we present basic assumptions and definitions.

In Section 3 we formulate the main theorem of this paper. By Lyapunov
functional some sufficient conditions are obtained to guarantee
attractivity and uniform stability Kolmogorov system of PDEs.

In Section 4 we give some remarks.

\section{Preliminaries}
We consider a nonautonomous system of PDEs of Kolmogorov type
\begin{equation*}
\label{R}
\tag{R}
\begin{cases}
\displaystyle
\frac{\p u_i}{\p t} = \Delta u_i + f_i(t,x,u_1, \dots, u_N) u_{i},
& \quad t > 0, \  x \in \Omega,  \  i = 1,\dots, N \\[2ex]
\mathcal{B}u_i = 0, & \quad t > 0, \  x \in \partial\Omega, \  i
= 1,\dots, N,
\end{cases}
\end{equation*}
where $\Omega \subset \RR^n$ is a bounded domain with a sufficiently
smooth
 boundary $\partial\Omega$,
 $\Delta$ is the Laplace operator on $\Omega$,
\begin{equation*}
\Delta = \frac{\p^2}{\p x_{1}^{2}} + \dots + \frac{\p^2}{\p
x_{n}^{2}},
\end{equation*}
  and $\mathcal{B}$ is the boundary
operator of the Neumann type
\begin{equation*}
\mathcal{B}u_i = \frac{\partial u_i}{\partial \nu} \quad \text{on
}\partial\Omega,
\end{equation*}
where $\nu$ denotes the unit normal vector on $\p\Omega$ pointing
out~of $\Omega$.

Now we make the following assumptions

(A1) (see \cite{M-B}) {\em $f_{i} \colon [0, \infty) \times
\bar{\Omega} \times [0,\infty)^{N} \to \RR$ \textup{(}$1 \le i \le
N$\textup{)}, as~well as their first derivatives $\p f_{i}/\p t$
\textup{(}$1 \le i \le N$\textup{)}, $\p f_i/\p u_j$ \textup{(}$1 \le
i,j \le N$\textup{)}, and $\p f_i/\p x_k$ \textup{(}$1 \le i \le N$,
$1 \le k \le n$\textup{)}, are continuous.}

\medskip
(A1) guarantees that for any  regular initial function $u_0(x)=
(u_{01}, \dots, u_{0N})$, $x \in \Omega$ where $u_{0i}\geq 0$ ($1\le
i \le N$, $x \in \Omega$) there exists a unique maximally defined
solution $u(t,x)=(u_1(t,x), \dots, u_N(t,x))$ of \eqref{R}, $(t,x)\in
[0, \tau_{\max})\times \bar{\Omega}$, where $\tau_{\max}>0$,
satisfying the initial condition $u(0,x)=u_0(x)$. Moreover, the
solution of system \eqref{R} is classical.

\medskip
(A2)(see \cite{M-B}) {\em Functions $[\, [0,\infty) \times \bar\Omega
\ni (t,x) \mapsto f_{i}(t,x,0, \dots, 0) \in \RR \,]$, $1 \le i \le
N$, are bounded.}

\begin{definition}
A solution $u(t,x)=(u_1(t,x), \dots, u_N(t,x))$ of system \eqref{R}
is \emph{positive} if $u_i(t,x)>0$ for all $i=1,\dots,N$, $x\in
\Omega$, $t>0$.
\end{definition}

\begin{definition}
System \eqref{R} is \emph{permanent} if there exist positive
constants $\underline{\delta}$, $\overline{\delta}$ such that for any
positive solution $u(t,x)=(u_1(t,x), \dots, u_N(t,x))$ there exists
$T=T(u)\geq0$ with the property
\begin{displaymath}
\underline{\delta} \le u_i(t,x) \le \overline{\delta}
\end{displaymath}
for all $t \geq T$, $x \in \bar{\Omega}$, $i=1, \dots, N$.
\end{definition}
\begin{definition}
System \eqref{R} is \emph{globally attractive} if any two positive
solutions $u(t,x)=(u_1(t,x), \dots, u_N(t,x))$ and $v(t,x)=(v_1(t,x),
\dots, v_N(t,x))$ of system \eqref{R} satisfy
\begin{equation*}
\lim_{t \to \infty} (u_i(t,x) - v_i(t,x))=0
\end{equation*}
for $1 \le i\le N$, uniformly in $x \in \bar{\Omega}$.
\end{definition}
\begin{definition}
System \eqref{R} is \emph{uniformly stable} on $[t_1, \infty)$, $t_1>
0$ if for every $\varepsilon>0$ there is a $\delta>0$ so that for any
two positive solutions $u(t,x)=(u_1(t,x),\dots, u_N(t,x))$  and
$v(t,x)=(v_1(t,x),\dots, v_N(t,x))$  if $t_2\in [t_1, \infty)$ and
$\left\| u(t_2,x)-v(t_2,x)\right\| < \delta$ then $\left\| u(t,x) -
v(t,x) \right\| < \varepsilon$ for $t\geq t_2$.
\end{definition}

\bigskip
The next assumption is

(A3)(see \cite{M-B}) {\em There exist $\underline{b}_{ii} > 0$ such
that $\frac{\partial f_i}{\partial u_i}(t,x,u) \le
-\underline{b}_{ii}$ for all $t \geq 0$, $x \in \bar{\Omega}$, $u \in
[0,\infty)^{N}$, $1 \le i \le N$.}

\medskip
Define
\begin{equation*}
B(\varepsilon) := \left[0,\frac{\bar{a}_{1}}{\underline{b}_{11}} +
\varepsilon\right] \times\dots \times
\left[0,\frac{\bar{a}_{N}}{\underline{b}_{NN}} + \varepsilon\right],
\quad \varepsilon \ge 0.
\end{equation*}

\bigskip
(A4)(see \cite{M-B})  {\em $\partial f_i/\partial u_{j}$, $1 \le i, j
\le N$, are bounded and uniformly continuous on each set $[0,\infty)
\times \bar{\Omega} \times B$.}

\medskip
For $1 \le i, j \le N$ define
\begin{equation*}
\overline{b}_{ij}(\epsilon):=\sup \biggl\{\, -\frac{\partial f_{i}}{\partial u_{j}}(t,x,u): t \ge 0,\ x
\in \bar{\Omega}, \ u \in B(\varepsilon) \, \biggr\}.
\end{equation*}
and $\overline{b}_{ij}(0):=\overline{b}_{ij}$.

\medskip
Assumptions (A3) and (A4) imply that $\overline{b}_{ij}(\epsilon) \ge
0$, $1 \le i, j, \le N$, and $\overline{b}_{ii}(\epsilon) > 0$, $1
\le i \le N$, whereas it follows from (A5) that
$\overline{b}_{ij}(\epsilon) < \infty$, and $\lim_{\epsilon \to
0^{+}} \overline{b}_{ij}(\epsilon) = \overline{b}_{ij}$, for $1 \le
i, j \le N$.

\bigskip
Recall that in \cite{M-B} we found average conditions for system
\eqref{R} to be permanent. The average conditions for system
\eqref{R} under the Neumann boundary conditions are
\begin{equation}
\label{AC}
\tag{AC}
m[f_i]>\sum_{\substack{j=1 \\ j\neq i}}^N \frac{\bar{b}_{ij}M[f_j]}{\underline{b}_{jj}},
\end{equation}
where
\begin{displaymath}
m[f_i]:= \liminf_{t-s\to \infty}\frac{1}{t-s}\int_s^t \min_{x\in
\bar{\Omega}}f_i(\tau,x,0,\dots,0)d\tau,
\end{displaymath}
and
\begin{displaymath}
M[f_i]:= \limsup_{t-s\to \infty}\frac{1}{t-s}\int_s^t \max_{x\in
\bar{\Omega}}f_i(\tau,x,0,\dots,0)d\tau.
\end{displaymath}

\section{Main Theorem}
In this section we state the main theorem of this paper and with the
help of a Lyapunov functional we prove attractivity and stability of
system \eqref{R}.
\begin{Theorem}
\label{Lyapunov}
Assume (A1) -- (A5) and (AC). Let $\underline{\delta},
\overline{\delta} \geq 0$ be such that for each $1 \le i\le N$
\begin{equation}
\label{2.1}
\tag{2.1}
\underline{\delta}\, \underline{b}_{ii} >
\sum_{\substack{j=1
\\j\neq i}}^N \overline{\delta}\, \overline{b}_{ji}.
\end{equation}
Suppose that $u(t,x) = (u_1(t,x), \dots, u_N(t,x))$ and $v(t,x) =
(v_1(t,x),\dots,
 v_N(t,x))$ be two positive solutions of system \eqref{R} such that $\underline{\delta}\le u_i(t,x),v_i(t,x)\le \overline{\delta}$
 for sufficiently large $t$ and all $x \in \bar{\Omega}$.
 Then there exist positive constants $Z$, $\gamma$ and $\tilde{T} \ge 0$ such that for
$i = 1,\dots,N$ and $t \geq \tilde{T}$ one has
\begin{displaymath}
\sum_{i=1}^N \sup_{x \in \bar{\Omega}} \lvert u_i(t,x) -
v_i(t,x)\rvert \le Z \sum_{i=1}^N \sup_{x \in \bar{\Omega}} \lvert
u_i(\tilde{T},x) - v_i(\tilde{T},x)\rvert \cdot \exp{(-\gamma
(t-\tilde{T}))},
\end{displaymath}
In particular system \eqref{R} is globally attractive and uniformly
stable.
\end{Theorem}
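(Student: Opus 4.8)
\emph{Proof sketch.} The plan is to exploit the Kolmogorov structure of \eqref{R} through a \emph{logarithmic} Lyapunov functional, which turns the problem into a linear system of differential inequalities on which \eqref{2.1} bites directly.

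Fix $\tilde{T}\ge 0$ so large that $\underline{\delta}\le u_i(t,x),v_i(t,x)\le\overline{\delta}$ for all $t\ge\tilde{T}$, $x\in\bar{\Omega}$, $1\le i\le N$ (such a $\tilde{T}$ exists by hypothesis; note that \eqref{2.1} forces $\underline{\delta}>0$). On $t\ge\tilde{T}$ put $w_i(t,x):=\ln u_i(t,x)-\ln v_i(t,x)$, which is $C^{1}$ in $t$ and $C^{2}$ in $x$, and define
\[
V(t):=\sum_{i=1}^{N}\max_{x\in\bar{\Omega}}\lvert w_i(t,x)\rvert .
\]
(No weights are needed in front of the summands, precisely because of the way \eqref{2.1} is written.) The reason for taking logarithms is that $\partial_t w_i=\dfrac{\Delta u_i}{u_i}-\dfrac{\Delta v_i}{v_i}+f_i(t,x,u)-f_i(t,x,v)$, so the sign--indefinite zeroth--order term that would appear if one differentiated $u_i-v_i$ directly is removed, leaving only the difference of the per--capita rates.

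The core step is to estimate $D^{+}V$. By a standard lemma on the differentiation of $t\mapsto\max_x\phi(t,x)$, $D^{+}\max_x\lvert w_i(t,\cdot)\rvert\le\operatorname{sgn}(w_i(t,x_i))\,\partial_t w_i(t,x_i)$ for a point $x_i=x_i(t)$ where $\lvert w_i(t,\cdot)\rvert$ attains its maximum. There $\nabla w_i(t,x_i)=0$, hence $\nabla\ln u_i=\nabla\ln v_i$ at $x_i$; writing $\Delta w_i=\tfrac{\Delta u_i}{u_i}-\lvert\nabla\ln u_i\rvert^{2}-\tfrac{\Delta v_i}{v_i}+\lvert\nabla\ln v_i\rvert^{2}$ and using that $x_i$ is an extremum of $w_i$ on $\bar{\Omega}$ (together with the Neumann condition $\p u_i/\p\nu=\p v_i/\p\nu=0$ to cover the case $x_i\in\p\Omega$), one gets $\operatorname{sgn}(w_i(t,x_i))\big(\tfrac{\Delta u_i}{u_i}-\tfrac{\Delta v_i}{v_i}\big)(t,x_i)\le0$. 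For the reaction part I would write $f_i(t,x_i,u)-f_i(t,x_i,v)=\sum_{j=1}^{N}a_{ij}\big(u_j(t,x_i)-v_j(t,x_i)\big)$ with $a_{ij}=\int_0^1\p f_i/\p u_j\,(t,x_i,v+s(u-v))\,ds$, then $u_j-v_j=\xi_j w_j$ with $\xi_j\in[\underline{\delta},\overline{\delta}]$ by the mean value theorem; since the segment joining $u(t,x_i)$ and $v(t,x_i)$ lies in $[\underline{\delta},\overline{\delta}]^{N}$, (A3)--(A4) give $a_{ii}\le-\underline{b}_{ii}$ and $0\le-a_{ij}\le\overline{b}_{ij}$. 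Using $\operatorname{sgn}(w_i(t,x_i))w_i(t,x_i)=\max_x\lvert w_i\rvert$ and $\lvert w_j(t,x_i)\rvert\le\max_x\lvert w_j\rvert$ this yields, for $t\ge\tilde{T}$ and each $i$,
\[
D^{+}\max_x\lvert w_i\rvert\le-\underline{\delta}\,\underline{b}_{ii}\,\max_x\lvert w_i\rvert+\overline{\delta}\sum_{\substack{j=1\\j\ne i}}^{N}\overline{b}_{ij}\,\max_x\lvert w_j\rvert .
\]
Summing over $i$ and reindexing the double sum gives $D^{+}V(t)\le-\gamma\,V(t)$, where $\gamma:=\min_{1\le i\le N}\big(\underline{\delta}\,\underline{b}_{ii}-\overline{\delta}\sum_{j\ne i}\overline{b}_{ji}\big)>0$ by \eqref{2.1}; hence $V(t)\le V(\tilde{T})e^{-\gamma(t-\tilde{T})}$. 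Finally, $\underline{\delta}\le u_i,v_i\le\overline{\delta}$ and the mean value theorem give $\tfrac{1}{\overline{\delta}}\lvert u_i-v_i\rvert\le\lvert w_i\rvert\le\tfrac{1}{\underline{\delta}}\lvert u_i-v_i\rvert$ pointwise, which upgrades the bound on $V$ to the asserted inequality with $Z=\overline{\delta}/\underline{\delta}$. Global attractivity is then immediate, and uniform stability follows by running the same chain of inequalities from an arbitrary $t_2\ge\tilde{T}$ in place of $\tilde{T}$ and converting the $\sum_i\sup_x$--quantity to $\norm{\cdot}$ by norm equivalence.

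The step I expect to be the main obstacle is the maximum--principle argument when the maximizing point $x_i$ lies on $\p\Omega$: one must use the Neumann boundary condition --- and, if needed, Hopf's boundary--point lemma or a local flattening of $\p\Omega$ --- to be sure that $\tfrac{\Delta u_i}{u_i}-\tfrac{\Delta v_i}{v_i}$ still carries the favourable sign there; the interior case is routine. A secondary, bookkeeping point is to check that the constants $\overline{b}_{ij}$ (defined through the boxes $B(\varepsilon)$ of \cite{M-B}) really do bound $\lvert\p f_i/\p u_j\rvert$ along the entire segment between $u$ and $v$, which is exactly where the a priori bounds $\underline{\delta}\le u_i,v_i\le\overline{\delta}$ enter. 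Everything else --- the moving--maximum lemma, the Gronwall step, and the equivalence of $\lvert\ln u_i-\ln v_i\rvert$ with $\lvert u_i-v_i\rvert$ on $[\underline{\delta},\overline{\delta}]$ --- is standard.
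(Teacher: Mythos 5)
Your proposal is correct and follows essentially the same route as the paper's proof: the logarithmic functional $\sum_i\sup_{x\in\bar{\Omega}}\lvert\ln(u_i/v_i)\rvert$, the maximum-point argument (interior and Neumann-boundary cases) to discard the diffusion term, the mean value theorem for the reaction term, the column-dominance condition (2.1) after reindexing the double sum, a Gronwall step, and the two-sided comparison of $\lvert\ln(u_i/v_i)\rvert$ with $\lvert u_i-v_i\rvert$ on $[\underline{\delta},\overline{\delta}]$. The only divergences are minor: the paper carries weights $\alpha_i$ obtained from a matrix-theory lemma (which, as you observe, are superfluous given the transposed form of (2.1)), and it works with $\overline{b}_{ij}(\varepsilon)$ instead of $\overline{b}_{ij}$, choosing $\varepsilon$ via (A5) so that the strict inequality survives --- which is exactly the ``bookkeeping'' point you flag, needed because the hypotheses only guarantee that the segment between $u$ and $v$ eventually lies in $B(\varepsilon)$ rather than in the box over which $\overline{b}_{ij}$ is defined.
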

\begin{proof}
Fix two positive solutions $u(t,x)=(u_1(t,x),\dots,u_N(t,x))$ and
$v(t,x)=(v_1(t,x),\dots,v_N(t,x))$ of system \eqref{R}. By assumption
(A5) we can take $\varepsilon>0$ such that
\begin{equation*}
\underline{\delta}_i\, \underline{b}_{ii} >
\sum_{\substack{j=1
\\j\neq i}}^N  \overline{\delta}_{j}\, \bar{b}_{ij}(\varepsilon) \quad \text{for all} \quad i=1,\dots,N.
\end{equation*}
Let $t_0>0$ be such that $u(t,x), v(t,x)\in B(\varepsilon)$ and
$\underline{\delta} \le u_i(t,x),v_i(t,x) \le \overline{\delta}$ for
all $t\geq t_0$, $x \in \bar{\Omega}$, $1\le i \le N$. By a result in
matrix theory (see  \cite[Section 5]{RR1}) we know that there exist
$\alpha_1, \dots, \alpha_N>0$ such that
\begin{equation}
\label{dominacja-1}
\alpha_i\underline{\delta}\, \underline{b}_{ii} >
\sum_{\substack{j=1
\\j\neq i}}^N \alpha_j \overline{\delta} \bar{b}_{ji}(\varepsilon), \quad 1\le i \le N.
\end{equation}
Denote
\begin{equation}
\label{2.2}
\Theta(t) = \sum_{i=1}^{N} \alpha_{i} \Theta_{i}(t), \; \text{where }
\Theta_{i}(t):= \sup\limits_{x \in \bar{\Omega}} \left\lvert \ln
\frac{u_i(t,x)}{v_i(t,x)} \right\rvert.
\end{equation}
Fix $1 \le i \le N$. We prove that there exists $\tilde{T}\geq t_0$
such that
\begin{equation}
\label{Lapunow-nierownosc}
D^{+}\Theta_i(t) \le -\underline{\delta} \, \underline{b}_{ii}
\Theta_i(t) + \overline{\delta} \sum_{\substack{j=1\\ j\neq i}}^N
\overline{b}_{ij}(\varepsilon) \Theta_j(t), \qquad t \ge \tilde{T}.
\end{equation}
where $D^{+}\Theta$ denotes the upper derivative of $\Theta$.

\medskip
We take for $\tilde{T}$ any number $\geq t_0$. For $t\geq \tilde{T}$
denote $\bar{\Omega}^{(i)}(t)$ to be the set of the points
$x^{(i)}\in \bar{\Omega}$ such that the supremum of the function
$\left\lvert \ln\frac{u_i(t,x)}{v_i(t,x)}\right\rvert$ is realized.
Then
$\bar{\Omega}^{(i)}(t)=\bar{\Omega}^{(i)}_{+}(t)\cup\bar{\Omega}^{(i)}_{-}(t)$,
$\bar{\Omega}^{(i)}_{+}(t)\cap\bar{\Omega}^{(i)}_{-}(t)= \emptyset$,
where
\begin{displaymath}
\bar{\Omega}^{(i)}_{+}(t) = \{x^{(i)} \in \bar{\Omega}^{(i)}(t) :
u_i(t,x^{(i)})>v_i(t,x^{(i)}) \}
\end{displaymath}
and
\begin{displaymath}
\bar{\Omega}^{(i)}_{-}(t) = \{x^{(i)} \in \bar{\Omega}^{(i)}(t) :
u_i(t,x^{(i)})<v_i(t,x^{(i)}) \}
\end{displaymath}
Therefore by \eqref{2.2}
\begin{equation}
\label{Lapunow-1}
\begin{aligned}
D^{+}\Theta_i(t) \le \max\biggl\{&\sup \Bigl\{\frac{\p}{\p t}
\ln\frac{u_i(t,x^{(i)})}{v_i(t,x^{(i)})} : x^{(i)} \in
\bar{\Omega}_+^{(i)}(t) \Bigr\},
\\
& \sup \Bigl\{\frac{\p}{\p t}  \ln\frac{v_i(t,x^{(i)})}{u_i(t,x^{(i)})}
: x^{(i)} \in \bar{\Omega}_{-}^{(i)}(t) \Bigr\} \biggr\}.
\end{aligned}
\end{equation}
Denote $\kappa_i:=\Theta_i(t)$. Now we consider four (not mutually
exclusive) cases.
\newline
\textit{Case one. $x^{(i)}\in\Omega\cap\bar{\Omega}^{(i)}_{+}(t)$.}
\newline
By the definition of $\kappa$ it follows that
\begin{equation}
\label{2.3}
u_i(t,x)\le e^{\varkappa_i}v_i(t,x) \quad \text{for all } x \in
\bar{\Omega} \quad \text{and} \quad u_i(t,x^{(i)}) =
e^{\varkappa_i}v_i(t,x^{(i)}),
\end{equation}
and
\begin{align*}
\frac{d}{dt}\left( \ln\frac{u_i(t,x^{(i)})}{v_i(t,x^{(i)})} \right) =
\frac{\frac{d}{dt}u_i{(t,x^{(i)})}}{u_i(t,x^{(i)})} -
\frac{\frac{d}{dt}v_i{(t,x^{(i)})}}{v_i(t,x^{(i)})}
\\
= \frac{\Delta u_i{(t,x^{(i)})}}{u_i(t,x^{(i)})} - \frac{\Delta
v_i{(t,x^{(i)})}}{v_i(t,x^{(i)})} + f_i(t,x^{(i)},u(t,x^{(i)}))-
f_i(t,x_i,v(t,x^{(i)})).
\end{align*}
By \eqref{2.3},
\begin{displaymath}
\frac{\Delta u_i{(t,x^{(i)})}}{u_i(t,x^{(i)})} - \frac{\Delta
v_i{(t,x^{(i)})}}{v_i(t,x^{(i)})}=
\frac{\Delta(u_i(t,x^{(i)})-e^{\varkappa_i}v_i(t,x^{(i)}))}{u_i(t,x^{(i)})}\le
0.
\end{displaymath}
Hence
\begin{displaymath}
\frac{d}{dt}\left( \ln\frac{u_i(t,x^{(i)})}{v_i(t,x^{(i)})}
\right)\le f_i(t,x^{(i)},u(t,x^{(i)}))- f_i(t,x^{(i)},v(t,x^{(i)})).
\end{displaymath}
\textit{Case two. $x^{(i)}\in\Omega\cap\bar{\Omega}^{(i)}_{-}(t)$.}

In a similar manner we prove that
\begin{equation*}
\frac{d}{dt}\left( \ln\frac{v_i(t,x^{(i)})}{u_i(t,x^{(i)})}
\right)\le f_i(t,x^{(i)},v(t,x^{(i)}))- f_i(t,x^{(i)},u(t,x^{(i)})).
\end{equation*}

\textit{Case three. $x^{(i)}\in
\partial\Omega\cap\bar{\Omega}^{(i)}_{+}(t)$}

This implies that $u_i(t,x) \le e^{\varkappa_i}v_i(t,x^{(i)})$ for
all $x \in \Omega$ and $u_i(t,x^{(i)}) =
e^{\varkappa_i}v_i(t,x^{(i)})$. Denote $g(x):= u_i(t,x) -
e^{\varkappa_i}v_i(t,x)$. The restriction $g|_{\p\Omega}$ of  $g$ to
$\partial\Omega$ has largest value at the point $x^{(i)}$,
consequently $\nabla(g|_{\p\Omega})(x^{(i)}) = 0$. By the definition
of $g$ it follows that $\frac{\partial g}{\partial \nu}=
\frac{\partial u_i}{\partial \nu} - e^{\varkappa_i}\frac{\partial
v_i}{\partial \nu} = 0$.  Hence $\nabla g(x^{(i)}) = 0$.

As $g$ attains its maximum at $x^{(i)}$, this implies that $\Delta
g(x^{(i)}) \le 0$.  Therefore
\begin{align*}
\frac{d}{dt}\left( \ln\frac{u_i(t,x^{(i)})}{v_i(t,x^{(i)})} \right) =
\frac{\frac{d}{dt}u_i{(t,x^{(i)})}}{u_i(t,x^{(i)})} -
\frac{\frac{d}{dt}v_i{(t,x^{(i)})}}{v_i(t,x^{(i)})}
\\
= \frac{\Delta u_i{(t,x^{(i)})}}{u_i(t,x^{(i)})} - \frac{\Delta
v_i{(t,x^{(i)})}}{v_i(t,x^{(i)})} +f_i(t,x^{(i)},u(t,x^{(i)}))-
f_i(t,x^{(i)},v(t,x^{(i)}))
\\
\le f_i(t,x^{(i)},u(t,x^{(i)}))- f_i(t,x^{(i)},v(t,x^{(i)})).
\end{align*}

\textit{Case four. $x^{(i)}\in\Omega\cup\bar{\Omega}^{(i)}_{-}(t)$}

In a similar manner we prove that
\begin{equation*}
\frac{d}{dt}\left( \ln\frac{v_i(t,x^{(i)})}{u_i(t,x^{(i)})} \right)
\le f_i(t,x^{(i)},v(t,x^{(i)}))- f_i(t,x^{(i)},u(t,x^{(i)})).
\end{equation*}

The cases 1 - 4 with \eqref{Lapunow-1} give
\begin{align*}
D^+\Theta_{i}(t) \le \sum_{i=1}^N \max\{\sup\{f_i(t,x^{(i)},
u(t,x^{(i)}))- f_i(t,x^{(i)},v(t,x^{(i)})) : x^{(i)} \in
\bar{\Omega}_+^{(i)}\},
\\
\sup\{f_i(t,x^{(i)}, u(t,x^{(i)}))- f_i(t,x^{(i)},v(t,x^{(i)})) :
x^{(i)} \in \bar{\Omega}_-^{(i)}\}  \}.
\end{align*}
By assumptions (A3), (A4) and (A5), it follows that
\begin{equation*}
\begin{split}
f_i(t,x^{(i)},u_1(t,x^{(i)}), \dots, u_N(t,x^{(i)})) -
f_i(t,v_1(t,x^{(i)})), \dots, v_N(t,x^{(i)})) \\
\le -\underline{b}_{ii} (u_i(t,x^{(i)}) - v_i(t,x^{(i)})) +
\sum_{\substack{j=1 \\ j\neq i}}^N \overline{b}_{ij}(\varepsilon)
\lvert u_j(t,x^{(i)}) - v_j(t,x^{(i)}) \rvert
\end{split}
\end{equation*}
for each $x^{(i)} \in \bar{\Omega}_+^{(i)}(t)$.

\smallskip
By assumptions (A3), (A4) and (A5), it follows that
\begin{equation*}
\begin{split}
f_i(t,x^{(i)}),u_1(t,x^{(i)}),\dots, u_N(t,x^{(i)}) -
f_i(t,v_1(t,x^{(i)})),\dots, v_N(t,x^{(i)}))  \\
\le -\underline{b}_{ii} (v_i(t,x^{(i)}) - u_i(t,x^{(i)})) +
\sum_{\substack{j=1 \\ j\neq i}}^N \overline{b}_{ij}(\varepsilon)
\lvert v_j(t,x^{(i)}) - u_j(t,x^{(i)}) \rvert
\end{split}
\end{equation*}
for each $x^{(i)} \in \bar{\Omega}_{-}^{(i)}(t)$.

\smallskip
Hence
\begin{equation*}
\begin{split}
D^{+} \Theta_{i}(t) \le \max\{ \sup_{x^{(i)} \in
\bar{\Omega}_+^{(i)}} \{ -\underline{b}_{ii} (u_i(t,x^{(i)}) -
v_i(t,x^{(i)})) + \sum_{\substack{j=1 \\ j\neq i}}^N
\overline{b}_{ij}(\varepsilon) \lvert u_j(t,x^{(i)}) - v_j(t,x^{(i)})
\rvert \},
\\
\sup_{x^{(i)} \in \bar{\Omega}_-^{(i)}} \{ -\underline{b}_{ii}
(v_i(t,x^{(i)}) - u_i(t,x^{(i)})) + \sum_{\substack{j=1 \\ j\neq
i}}^N \overline{b}_{ij}(\varepsilon) \lvert v_j(t,x^{(i)}) -
u_j(t,x^{(i)}) \rvert \} \}
\end{split}
\end{equation*}

Since $\delta_i \le u_i(t,x), v_i(t,x) \le R_i$ for all $i=1,\dots,
N$ and for sufficiently large $t$, using the mean value theorem we
have that
\begin{equation}
\label{rrc2.6}
\frac{1}{\overline{\delta}} \lvert u_j(t,x) - v_j(t,x) \rvert \le
\Bigl\lvert \ln{\frac{u_j(t,x)}{v_j(t,x)}} \Bigr\rvert \le
\frac{1}{\underline{\delta}} \lvert u_j(t,x) - v_j(t,x) \rvert
\end{equation}
for all $t \ge \tilde{T}$, $x \in \bar{\Omega}$, $1 \le j \le N$.

Therefore
\begin{equation*}
\begin{split}
D^+\Theta_{i}(t) \le \max\Bigl\{\sup\{-\underline{\delta} \,
\underline{b}_{ii}
\Bigl\lvert\ln\frac{u_i(t,x^{(i)})}{v_i(t,x^{(i)})} \Bigr\rvert +
\overline{\delta} \sum_{\substack{j=1\\j\neq i}}^N
\overline{b}_{ij}(\varepsilon)
\Bigl\lvert\ln\frac{u_j(t,x^{(i)})}{v_j(t,x^{(i)})} \Bigr\rvert :
x^{(i)} \in \bar{\Omega}_+^{(i)}\}, \\ \sup\{-\underline{\delta} \,
\underline{b}_{ii}
\Bigl\lvert\ln\frac{u_i(t,x^{(i)})}{v_i(t,x^{(i)})} \Bigr\rvert +
\overline{\delta} \sum_{\substack{j=1\\j\neq i}}^N
\overline{b}_{ij}(\varepsilon)
\Bigl\lvert\ln\frac{u_j(t,x^{(i)})}{v_j(t,x^{(i)})} \Bigr\rvert :
x^{(i)} \in \bar{\Omega}_-^{(i)}\} \Bigr\}
\\
\le \Bigl( -\underline{\delta} \, \underline{b}_{ii} \sup\limits_{x \in
\bar{\Omega}} \left\lvert \ln \frac{u_i(t,x)}{v_i(t,x)}\right\rvert +
\overline{\delta} \sum_{\substack{j=1\\ j\neq i}}^N
\overline{b}_{ij}(\varepsilon) \sup\limits_{x \in \bar{\Omega}} \left\lvert
\ln \frac{u_j(t,x)}{v_j(t,x)}\right\rvert \Bigr), \quad t\geq \tilde{T}.
\end{split}
\end{equation*}
Hence we proved \eqref{Lapunow-nierownosc}.

By \eqref{dominacja-1}, it follows that
\begin{equation}
\label{nierownosc-1}
\begin{aligned}
& \qquad \sum_{i=1}^N \alpha_{i} \biggl( -\underline{\delta} \,
\underline{b}_{ii} \Theta_i(t) + \overline{\delta} \sum_{\substack{j=1\\
j\neq i}}^N \overline{b}_{ij}(\varepsilon) \Theta_j(t) \biggr)
\\
& = -\sum_{i=1}^N  \alpha_{i} \underline{\delta} \,
\underline{b}_{ii}
\Theta_i(t) + \overline{\delta} \sum_{j=1}^N \ \sum_{\substack{i=1\\
i\neq j}}^N \alpha_{j} \overline{b}_{ji}(\varepsilon) \Theta_i(t)
\\
& = -\sum_{i=1}^N \Bigl( \alpha_{i} \underline{\delta} \,
\underline{b}_{ii} - \sum_{\substack{j=1\\ j\neq i}}^N \alpha_{j}
\overline{\delta} \, \overline{b}_{ji}(\varepsilon) \Bigr)
\Theta_i(t)
\\
& \le - \epsilon \sum_{i=1}^N \Theta_i(t), \quad t\geq \tilde{T},
\end{aligned}
\end{equation}
where
\begin{equation*}
\epsilon = \min\limits_{1 \le i \le N} \biggl\{ \alpha_{i} \underline{\delta} \,
\underline{b}_{ii} - \sum_{\substack{j=1\\ j\neq i}}^N \alpha_{j}
\overline{\delta} \, \overline{b}_{ji}(\varepsilon) \biggr\}.
\end{equation*}
Hence by \eqref{Lapunow-nierownosc} and \eqref{nierownosc-1},
\begin{equation}
\label{stability}
D^{+}\Theta(t) \le \sum\limits_{i=1}^{N} \alpha_{i} D^{+} \Theta_i(t)
\le - \epsilon \sum_{i=1}^N \Theta_i(t) \le
-\frac{\epsilon}{\alpha^{*}} \Theta(t), \quad t\geq \tilde{T}
\end{equation}
where $\alpha^{*} = \max\{\alpha_i : 1 \le i \le N \}$. Therefore
\begin{equation}
\label{wykladniczy}
\Theta(t) \le \Theta(\tilde{T}) \exp{\Bigl( -\frac{\epsilon}{\alpha^{*}}(t - \tilde{T}) \Bigr)}
\end{equation}
for all $t \ge \tilde{T}$.


By \eqref{rrc2.6} it follows that
\begin{equation}
\label{mean-value1}
\Theta(t) \ge \alpha_{*} \sum\limits_{i=1}^{N} \Theta_{i}(t) \ge
\frac{\alpha_{*}}{\overline{\delta}} \sum_{i=1}^N \sup_{x \in
\bar{\Omega}} \lvert u_i(t,x) - v_i(t,x) \rvert, \quad t \ge \tilde{T},
\end{equation}
and
\begin{equation}
\label{mean-value2}
\Theta(\tilde{T}) \le \alpha^{*} \sum\limits_{i=1}^{N} \Theta_{i}(t) \le
\frac{\alpha^{*}}{\underline{\delta}}\sum_{i=1}^N \sup_{x \in
\bar{\Omega}} \lvert u_i(\tilde{T},x) - v_i(\tilde{T},x) \rvert, \quad t\geq \tilde{T}
\end{equation}
where $\alpha_{*} = \min\{\alpha_i : 1 \le i \le N \}$.

From \eqref{wykladniczy}, \eqref{mean-value1} and \eqref{mean-value2}
we have that
\begin{displaymath}
\label{3.12}
\tag{3.12} \sum_{i=1}^N \sup_{x \in \bar{\Omega}} \lvert u_i(t,x) -
v_i(t,x)\rvert \le Z \sum_{i=1}^N \sup_{x \in \bar{\Omega}} \lvert
u_i(\tilde{T},x) - v_i(\tilde{T},x)\rvert \cdot \exp{(-\gamma
(t-\tilde{T}))}, \quad t\geq \tilde{T},
\end{displaymath}
where $Z = \frac{\overline{\delta} \alpha^*}{\underline{\delta}
\alpha_*}$ and $\gamma = \frac{\epsilon}{\alpha^*}$ ($Z$ and $\gamma$
we can take independently on the solutions $u(t,x)$ and $v(t,x)$).

By \eqref{3.12} it follows that the system \eqref{R} is globally
attractive and uniformly stable.

\bigskip

\end{proof}

\section{Remarks}

\emph{Remark 1.} Note that conditions  \eqref{dominacja-1} contain
$\underline{\delta}$ and $\overline{\delta}$ which appear in
definition of permanence of system \eqref{R}. In \cite{M-B} we showed
that if we change conditions \eqref{AC} to slightly stronger that is
\begin{displaymath}
\label{AC'}
\tag{AC'} m[f_i]> \sum_{\substack{j=1 \\ j\neq i}}^N
\frac{\overline{b}_{ij}\overline{a}_j}{\underline{b}_{jj}}
\end{displaymath}
then we can estimate $\underline{\delta}$ and $\overline{\delta}$ in
terms of parameters of system \eqref{R}. By dissipativity of system
\eqref{R} it follows that we can take  $\max_{1\le i \le
N}\{\frac{\overline{a}_i}{\underline{b}_{ii}}\}$ as
$\overline{\delta}$. Concerning the lower bound (see \cite[Theorem
4.1]{M-B}) as $\underline{\delta}$ we can take
\begin{equation*}
\min_{1\le i\le N} \{\frac{1}{\overline{b}_{ii}}(\underline{a}_i- \sum_{\substack{j=1\\ j \neq i}}^N \frac{\overline{b}_{ij} \overline{a}_{j}}{\underline{b}_{jj}})\}
\end{equation*}
when
\begin{equation*}
\underline{a}_i>\sum_{\substack{j=1\\j\neq i}}^N \frac{\overline{b}_{ij} \overline{a}_{j}}{\underline{b}_{ij}}
\end{equation*}
for all $1 \le i \le N$.

If we define $\underline{\delta}$ and $\overline{\delta}$ as in the
above condition \eqref{2.1}, take a form
\begin{displaymath}
\alpha_i m[f_i]\geq \sum_{\substack{j=1\\j\neq i}}^N\left(
\alpha_i\frac{\overline{b}_{ij}M[f_j]}{\underline{b}_{jj}} +
\alpha_j\frac{\overline{b}_{ji}M[f_i]}{\underline{b}_{ii}}   \right).
\end{displaymath}
\emph{Remark 2.}
\newline
\begin{definition}
System \eqref{R} is \emph{uniformly asymptotically stable} on $[t_1,
\infty)$ if it is uniformly stable and if there is a $r>0$ so that
for every $\varepsilon>0$ there is a $T(\varepsilon)>0$ so that for
any two positive solutions $u(t,x)=(u_1(t,x), \dots, u_N(t,x))$ and
$v(t,x)=(v_1(t,x), \dots, v_N(t,x))$ if $t_2\geq t_1$ and
$\left\|u(t_2,x)-v(t_2,x) \right\| <r$ then $\left\|u(t,x)-v(t,x)
\right\| <\varepsilon$ for $t\geq t_2 + T(\varepsilon)$.
\end{definition}
Note that from attractivity and uniform stability  of system
\eqref{R} it follows that system \eqref{R} is uniformly
asymptotically stable.
\section{Acknowledgment}
The author thanks Professor Janusz Mierczy\'nski for helpful
discussions du\-ring writing of this manuscript.


\begin{thebibliography}{99}
%
\bibitem{Ahmad}
S. Ahmad and A.C. Lazer, \emph{Average conditions for global
asymptotic stability in a nonautonomous Lotka-Volterra system}.
onlinear Anal.\textbf{40}(2000) 37 -- 49.
%
\bibitem{M-B}
J. Balbus, J. Mierczy\'nski, \emph{Time-Averaging and Permanence in
Nonautonomous Competitive Systems of PDEs via Vance-Coddington
Estimates}, Discrete and continuous dynamical systems series B, Vol.
17, No. \textbf{5} (2012) doi:10.3934/dcdsb.2012.17.xx
%
%
\bibitem{Gopalsamy-almost-periodic}
K. Gopalsamy, \emph{Global asymptotic stability in an almost --
periodic Lotka -- Volterra system}, J. Aust. Math. Soc. Ser B
\textbf{27} (1986) 346 -- 360.
%
\bibitem{Gopalsamy-periodic}
K. Gopalsamy, \emph{Global asymptotic stability in a periodic Lotka
-- Volterra system}, J. Aust. Math. Soc. Ser B \textbf{27} (1986) 66
-- 72. 
\bibitem{Langa}
J.A. Langa, J.C. Robinson, A. Suarez, \emph{Pullback permanence in a
non-autonomous competitive Lotka -- Volterra model}, J.Differential
Equations \textbf{190} (2003) 214 -- 238.

%
\bibitem{moja}
J. P\k{e}tela (J. Balbus), \emph{Average conditions for Kolmogorov
systems}, Appl. Math and Comp. \textbf{215}(2009) 481 -- 494.
%
\bibitem{RR1}
R. Redheffer,
 \emph{Nonautonomous Lotka--Volterra systems. I},
 J. Differential Equations \textbf{127} (1996), no. \textbf{2},
519 -- 541.
%
\bibitem{Sze-Bi}
Sze-Bi Hsu, \emph{ A survey of constructing Lyapunov functions for
mathematical models in population biology} Taiwanese Journal of
mathematics Vol. 9 No \textbf{2}, June 2005, pp.151-173
%
\bibitem{Tineo}
A. Tineo, \emph{On the asymptotic behavior of some population
models}. J. Math. Anal. Appl. \textbf{167} (1992) 516 -- 529.
%
\bibitem{Riviero}
 A. Tineo, J. Riviero,
 \emph{Permanence and asymptotic stability for competitive and Lotka-Volterra systems with diffusion }
Nonlinear Analysis: Real World Applications \textbf{4} (2003),
615--624.
%

\end{thebibliography}
\end{document}